\newtheorem*{theorem*}{Theorem}
\newcommand{\ex}{\mathcal{E}} % \varepsilon}
\newcommand{\T}{\tau}  %\mathcal{T}\hspace{.5pt}}
\newcommand{\E}{\textbf{E}}
\newcommand{\R}{\mathbb{R}}
\newcommand{\N}{\mathbb{N}}
\newcommand{\fib}{\operatorname{{{\rm Fib}}}_F}
\newcommand{\extbas}{\mathbf{E}^{\mathbb{R}_+}}
\newcommand{\SN}{\mathfrak{S}}
\newcommand{\DN}{\mathfrak{D}}
\newtheorem{theorem}{Theorem}[section]
\newtheorem{corollary}[theorem]{Corollary}
 \newtheorem{lemma}[theorem]{Lemma}
 \newtheorem{proposition}[theorem]{Proposition}
 \theoremstyle{definition}
 \newtheorem{definition}[theorem]{Definition}
 \theoremstyle{remark}
 \numberwithin{equation}{subsection}
\newcommand{\br}{\mathbb R}
\newcommand{\PP}{\textbf{P}}
\begin{document}

\title{Classification of exterior and proper fibrations\footnotetext{ This work has been supported by
the MICINN grant MTM2016-78647 of the Spanish Government and
 and by the Junta de Andaluc\'{\i}a grant FQM-213.\vskip 4pt
2010 \textit{Mathematics Subject Classification}: 55P57, 55R05, 55R15.
\vskip 4pt
\textit{Keywords}: Proper homotopy theory, exterior homotopy theory, exterior fibration, proper fibration, Brown
representability.}}

\author{J. M. Garc\'{\i}a-Calcines, P. R. Garc\'{\i}a-D\'{\i}az \\
and A. Murillo}

\maketitle

\begin{abstract}
We classify  exterior fibrations in the exterior homotopy category. As a result we also classify proper fibrations between CW-complexes.
\end{abstract}

\section*{Introduction}
It is known that the absence of most usual categorical properties, such as the existence of (co)limits, constitutes the main handicap for the development of proper homotopy theory. An illustrative example is given by the fact that the category  $\PP$  of topological spaces and proper maps is an {\em $I$-category} \cite{Ay-D-Q}, and thus it is also a {\em cofibration category}, while even the definition of what a proper fibration should be presents some issues \cite{adoquin,chap}. However, $\PP$ is embedded in the category $\E$ of exterior spaces which is complete and cocomplete and it has model category structures \cite{Ext_1,Ext_2,G-G-M} closely related to the classical ones on topological spaces (see \S1 for notation and terminology). From this, the concept of proper fibration becomes clear as  exterior fibration in the proper category. In this paper we  classify exterior fibrations obtaining as an immediate consequence also the classification of proper fibrations.

On the free setting, we may consider any of these available  model structures on $\E$ and apply directly the Blomgren and Chach\'olski classification of fibrations on model structures \cite[Thm. B]{blomcha}, inspired in the foundational works of May  \cite{may} and Stasheff \cite{s}. As a result one obtains that the whole moduli space of fiber exterior homotopy equivalences over an exterior space $X\in \E$ with fiber $F\in \E$ is weakly equivalent to a mapping space with target the classifying space of the monoid of exterior self weak equivalences of $F$.

On the other hand, the classification of exterior fibrations in the based setting is not straightforward. Indeed, general statements of Brown representability, see for instance \cite[\S3]{jardine}, cannot be applied to the based exterior category $\extbas$ as this is no longer a model category. Thus, as in \cite{al,schon}, we attack the classification based on the original work of Brown \cite{Br} to prove the following (see Theorem \ref{main} for a precise statement).

 Let $F$ be an exterior path connected based  CW-complex and denote by $\fib(X)$ the set of equivalence classes of exterior fiber sequences over  a given exterior path connected based CW-complex $X$ with fiber $F$.

\begin{theorem*} There exists an exterior path connected based CW-complex  $Y_F$, unique up to exterior homotopy, such that,
$$
\fib(X)\cong [X,Y_F]^{\br_+}.
$$
\end{theorem*}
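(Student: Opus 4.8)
The plan is to prove this via Brown representability, adapted to the exterior based category $\extbas$. Since this is not a model category, I cannot invoke standard Brown representability directly; instead I would follow the classical route of Brown \cite{Br}, verifying his two axioms for the contravariant functor $\fib(-)$ on the homotopy category of exterior path connected based CW-complexes. The two properties to check are the \emph{wedge (Mayer--Vietoris/additivity) axiom} and the \emph{gluing axiom}: that $\fib$ sends arbitrary wedges to products, $\fib(\bigvee_\alpha X_\alpha)\cong \prod_\alpha \fib(X_\alpha)$, and that for an exterior CW-triad $X=A\cup B$ the natural map $\fib(X)\to \fib(A)\times_{\fib(A\cap B)}\fib(B)$ is surjective. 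Granting these, Brown's theorem yields the representing object $Y_F$ together with a universal element, and the bijection $\fib(X)\cong [X,Y_F]^{\br_+}$ follows formally, with uniqueness up to exterior homotopy coming from Yoneda.

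First I would pin down that $\fib(-)$ is a genuine contravariant functor, i.e.\ that pullbacks of exterior fiber sequences along exterior based maps exist and are well defined on equivalence classes. This is where completeness and cocompleteness of $\E$ (noted in the introduction) does the heavy lifting: the pullback of an exterior fibration exists in $\E$, and homotopic maps induce equivalent pulled-back fibrations, so the assignment descends to $[X,-]^{\br_+}$. Next I would establish the wedge axiom. The fiber $F$ is fixed and path connected, so a fiber sequence over $\bigvee_\alpha X_\alpha$ should restrict to one over each summand, and conversely fiber sequences over the summands glue over the common basepoint fiber; the path connectedness of $F$ and the based hypothesis are exactly what make the restriction/assembly maps inverse bijections.

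The main obstacle, as the authors themselves flag, is that $\extbas$ is \emph{not} a model category, so the gluing (Mayer--Vietoris) axiom cannot be imported from abstract homotopical machinery and must be proved by hand in the exterior setting. Concretely, given exterior fiber sequences over $A$ and over $B$ that agree up to equivalence over $A\cap B$, I must \emph{construct} an exterior fiber sequence over $X=A\cup B$ restricting to the given data. The delicate point is controlling behavior ``at infinity'': the exterior structure (the externally open covering / the role of $\br_+$) must be respected by the gluing, and the identification of the two fibrations over $A\cap B$ is only up to exterior homotopy, so I would need a mapping-cylinder or homotopy-pushout construction \emph{internal to} $\E$ to rigidify the equivalence into a strict identification before gluing. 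Verifying that the glued object is again an exterior fibration with fiber $F$ — rather than merely an exterior map — is the technical crux; I expect this to rely on an exterior version of the gluing lemma for fibrations together with the CW-structure to perform the gluing cell-by-cell.

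Finally, once both axioms hold, I would invoke Brown's representability theorem to obtain $Y_F$ in the category of exterior path connected based CW-complexes, representing $\fib(-)$ on this subcategory; I would then check $Y_F$ is itself exterior path connected (inherited from the path connectedness of the fiber data and of the objects $X$) and that the representing isomorphism is natural, giving $\fib(X)\cong [X,Y_F]^{\br_+}$. Uniqueness up to exterior homotopy is then the standard Yoneda consequence of representability. Throughout, the ``based'' hypothesis and the restriction to path connected CW-complexes are what keep the construction inside the category where Brown's original argument applies, sidestepping the failure of the full model structure.
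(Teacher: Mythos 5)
Your proposal is correct and follows essentially the same route as the paper: Brown's original representability theorem applied to $\fib$ on the homotopy category of exterior path connected based CW-complexes, after verifying the wedge axiom and the Mayer--Vietoris (gluing) axiom by hand since $\extbas$ is not a model category. The only differences are matters of execution rather than strategy: the paper derives both axioms cleanly from exterior versions of Mather's First Cube Theorem and the Gluing Lemma (rather than your proposed cell-by-cell rigidification), and it also explicitly verifies the category-level hypotheses of Brown's theorem --- existence of coproducts, homotopy pushouts and telescopes, and compact generation by finite exterior CW-complexes --- by citing earlier work, a step your sketch largely takes for granted.
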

As a result, see Corollary \ref{proper}, we exhibit any based proper fibration $p\colon (E,\br_+)\to (X,\br_+)$    between countable, locally finite relative CW-\break complexes, with fiber $F$, as the exterior pullback of the universal fibration $F\to U\to Y_F$ in which $Y_F$ lies  along a based exterior map $f\colon X\to Y_F$.

\section{Preliminaries}

We shall be using the following standard facts on  exterior homotopy theory whose summary can be found  in \cite[\S2]{G-G-M2} or \cite[\S1]{G-G-M3}.

An \textit{exterior space}
$(X,\ex)$ is a topological space $(X,\T)$ endowed with an \textit{externology} $\ex\subset \T $, i.e.,   a
non empty family of so called {\em exterior sets} which can be thought as a neighborhood system at infinity: it is closed under finite intersections
and, whenever $U \supset E$, $E \in \ex$, $U\in \T$, then $U\in
\ex$.
 An  \textit{exterior} map $f\colon (X,\ex ) \rightarrow (X',\ex'
)$ is a continuous map for which $f^{-1}(E) \in
\ex$, for all $E \in \ex'$. The cylinder functor in this category assigns to each exterior space $X$ the topological space $X\times I$ in which an open set is exterior if it contains some $E\times I$ with $E$ exterior set of $X$. To avoid confusion with the usual externology on the product of two exterior spaces, we denote by $X\bar\times I$ this exterior space. Exterior homotopy is defined accordingly.

The \textit{cocompact externology} $\ex_{cc}$ on a given topological space $X$  is formed by the
family of the complements of all closed-compact sets of $X$. Denote by $X_{cc}$ the
corresponding exterior space. This defines a full embedding
\cite[Thm. 3.2]{Ext_1}
\begin{equation}\label{embebe}
(-)_{cc}\colon \PP \hookrightarrow \E
\end{equation}
from the proper category $\PP$ of topological spaces and proper maps. As $(X\times I)_{cc}=X_{cc}\bar\times I$ this embedding extends to the respective homotopy categories.

Unlike $\PP$, the category $\E$ is complete, cocomplete
\cite[Thm. 3.3]{Ext_1} and it has a closed model structure in which fibrations, cofibrations and weak equivalences are respectively  the exterior maps satisfying the homotopy lifting property, the exterior closed maps satisfying the homotopy extension property, and the exterior homotopy equivalences respectively \cite[Thm. 2.10]{G-G-M}.

In the based setting, and both in $\PP$ and $\E$, the ``point'' is the  half-line $\mathbb{R}_+=[0,\infty)$ endowed with the cocompact externology. Accordingly, the objects of the category $\E^{\R_+}$ of {\em well pointed exterior spaces}\footnote{We warn the reader that this category was denoted by $\E^{\R_+}_w$ in  \cite{G-G-M2,G-G-M3} while $\E^{\R_+}$ was reserved for based exterior spaces, non necessarily well pointed, i.e., the based ray is not necessarily a closed cofibraton. As all based exterior spaces we consider here are well pointed we  avoid excessive notation.}   are pairs $(X,\alpha)$, or simply $(X,\br_+)$, in which $X\in \E$ and $\alpha\colon \mathbb{R}_+\to X$ is a closed exterior cofibration called the {\em based ray}.  Morphisms $f\colon (X,\alpha ) \rightarrow
(Y,\beta )$ are exterior maps $f\colon X\rightarrow Y$ for which
$f \alpha = \beta $. Homotopy in $\E^{\R_+}$ is defined through the functor  which assigns to each
$(X,\br_+ )\in \E^{\R_+}$, the \textit{based exterior cylinder of
$X$}, $I^{\R_+} X$,  defined by the pushout:
$$
\xymatrix{
  \R_+\bar\times I \ar[d] \ar[r] & \R_+ \ar[d]^{} \\
  X\bar\times I \ar[r]^{} & I^{\R_+} X.  }
$$
Then, $\E^{\R_+}$ verifies all the axioms of a closed
model category, except for being closed for finite limits and
colimits  \cite[Thm. 2.12]{G-G-M}. Fibrations (resp. cofibrations)
are  exterior based maps which verify the Homotopy Lifting
Property (resp. the Homotopy Extension Property), while  weak
equivalences are  based exterior homotopy equivalences.
Nevertheless, pullbacks of fibrations and
pushouts of cofibrations  can be constructed within $\E^{\R_+}$
and thus, {\em exterior homotopy pullbacks and pushouts} are
defined in this category. We denote by $\mathbf{Ho}\extbas$ the corresponding homotopy category with
the same objects, and whose set of morphisms $[X,Y]^{\R_+}$
between $(X,\R_+)$ and $(Y,\R_+)$ are based exterior homotopy classes of exterior maps.

Finally, a  space $X\in\extbas$ is {\em exterior path connected} if it is path connected as a topological space and  $[S^0_+,X]^{\R_+}=\{*\}$. Here, $S^0_+$ denotes $\br_+$ with a $0$-sphere
attached to each integer number,  endowed with the cocompact
externology, and the obvious ray
$\br_+\hookrightarrow  S^0_+$. For instance, in the based proper category $\PP^{\br_+}$, and under very mild conditions, a space $X$ is exterior path connected if and only if it has only one Freudenthal end.

Next, recall the notion of {\em exterior CW-complexes} which include, in the proper
case,   spherical objects under a tree
\cite[IV]{B-Q}.
From now on $\N\subset \R_+$ will always be endowed with the
induced externology.
Given $n\ge 0$, we denote by  $\SN^k$  either the
sphere $S^k$ or the
 $\N$-sphere defined as the exterior space $\N\bar\times S^{k}$. Analogously $\DN^k$
will ambiguously denote either the  disk $D^k$
or  the  $\N$-disk $\N\bar\times D^{k}$. The inclusion
 $\SN^{k-1} \hookrightarrow \DN^{k}$ is a closed exterior cofibration.

A \textit{relative exterior CW-complex} $(X,A)$ is an exterior
space $X$ together with a filtration of exterior subspaces
$$A=X^{-1} \subset X^0 \subset X^1 \subset \ldots \subset
X^n\subset \ldots \subset X$$ \noindent for which
$X=\mbox{colim}\hspace{3pt}X^n,$ and for each $n\geq 0$, $X^n$ is
obtained from $X^{n-1}$ as the exterior pushout
$$\xymatrix@C=1.6cm@R=1cm{
  \amalg_{\gamma \in \Gamma} \SN^{n-1} \ar@{^{ (}->}[d]_{} \ar[r]^{\amalg_{\gamma \in \Gamma}\varphi_\gamma} & X^{n-1} \ar@{^{ (}->}[d]^{} \\
  \amalg_{\gamma \in \Gamma} \DN^{n} \ar[r] & X^n   }$$
via the  attaching maps $\varphi_\gamma\colon \SN^{n-1} \rightarrow
X^{n-1}$ and where $\amalg$ denotes disjoint union. When $A=\emptyset$ (respec. $A=\mathbb{R}_+$) we recover the notion of
 \emph{exterior CW-complex} (respec.
\emph{based exterior CW-complex}). In the last case $(X,\alpha)$  is necessarily well based as
the inclusion
$\mathbb{R}_+\hookrightarrow X$ is a closed cofibration).

Remark that a finite exterior CW-complex is in general a finite
dimensional infinite classical CW-complex. Also,  any classical
CW-complex is an exterior CW-complex with its topology as
externology. Other important class of exterior CW-complexes are
constituted by the open  manifolds and PL-manifolds as
they admit a locally finite countable triangulation, which
describes the exterior CW-structure \cite[\S 2(ii)]{G-G-M2}.
We denote by $\mathbf{CW}^{\mathbb{R}_+}$, (resp.
$\mathbf{CW}^{\mathbb{R}_+}_f$)  the full subcategory of
$\mathbf{E}^{\mathbb{R}_+}$ formed by based exterior
CW-complexes (resp. finite based exterior CW-complexes), and
by $ \mathbf{Ho CW}^{\mathbb{R}_+}$ (resp. $ \mathbf{Ho
CW}^{\mathbb{R}_+}_f$) the corresponding homotopy categories.

Finally, recall  that  a map $f\colon X\to Y$ between
exterior path connected spaces in  $ \mathbf{CW}^{\mathbb{R}_+}$ is an exterior based
homotopy equivalence if and only if $f_*\colon [Z,X]^{\R_+}\to
[Z,Y]^{\R_+}$ is a bijection for every exterior path connected $Z\in
\mathbf{CW}^{\mathbb{R}_+}_f$.

\section{Classification of exterior and proper fibrations}

To avoid excessive terminology every CW-complex considered henceforth will be exterior, based and exterior path connected unless explicitly stated otherwise.

In particular, also for simplicity in the  notation, we abuse of it and let $ \mathbf{Ho
CW}^{\mathbb{R}_+}$ (respec. $ \mathbf{Ho
CW}^{\mathbb{R}_+}_f$) denote the  homotopy category of exterior path connected,  based (respec. finite) CW-complexes.

Let $F\in\extbas$ a well based exterior space. Define a contravariant functor
$$
\fib\colon\mathbf{Ho}\extbas\longrightarrow \mathbf{Sets}
$$
as follows:

For each   $X\in\extbas$, $\fib(X)$ is the set of equivalence classes of {\em exterior fiber sequences over $X$ with fiber $F$}.  These are  sequences $\mathcal{F}$ in  $\extbas$ of the form
$$F\stackrel{g}{\longrightarrow }E\stackrel{p}{\twoheadrightarrow}X$$
where
$p\colon E\twoheadrightarrow X$ is an exterior fibration
in $\extbas$ and there is an exterior homotopy pullback
$$\xymatrix{
{F} \ar[d]_{r} \ar[r]^g & {E} \ar@{>>}[d]^p \\
{\mathbb{R}_+} \ar[r]_{\alpha _X} & {X} }$$
where $\alpha_X$  is the based ray of $X$. That is, there exists a commutative

$$\xymatrix@C=.7cm@R=.5cm{
{F} \ar[dd]_{r} \ar[rr]^{g} \ar@{.>}[dr]^{\simeq} & &  {E} \ar[dd]^p\\
 & {\bullet} \ar[dl]\ar[ur]  & \\
 {\br_+} \ar[rr]_{\alpha_X}  & & X  }$$
where the inner square is the pullback of $p$ along $\alpha_X$ and the dotted induced map is an exterior homotopy equivalence.

 Note that this implies the existence of the map $r\colon F\to\br_+$ which is necessarily unique up to exterior homotopy \cite[Rem. 2.18]{G-G-M2},  and therefore, it is an exterior homotopy retraction of the ray of $F$.  Two fiber sequences $F\stackrel{g_1}{\longrightarrow }E_1\stackrel{p_1}{\twoheadrightarrow}X$ and $F\stackrel{g_2}{\longrightarrow }E_2\stackrel{p_2}{\twoheadrightarrow}X$ are equivalent if there exists a homotopy commutative diagram
in $\mathbf{E}^{\mathbb{R}_+}$ of the form
$$\xymatrix{
 & {E_1} \ar@{>>}[dr]^{p_1} \ar[dd]_{\simeq }^{\gamma } & \\
 {F} \ar[ur]^{g_1} \ar[dr]_{g_2} & & {X} \\
 & {E_2} \ar@{>>}[ur]_{p_2} &
}$$ \noindent with $\gamma $ an exterior based homotopy
equivalence.

On the other hand, given $f\colon Y\rightarrow X$ a (homotopy class of a) based exterior map
$\fib(f)$ associates to $F\stackrel{g}{\longrightarrow }E\stackrel{p}{\twoheadrightarrow}Y$ the fiber sequence $F\stackrel{g'}{\longrightarrow }E\stackrel{p'}{\twoheadrightarrow}Y$ in which $p'$ is obtained as the based exterior pullback of $p$ along $f$ and $g'$ is induced by $g$ and $\alpha_Y\circ r$:
$$\xymatrix{
  {F} \ar@{.>}[dr] \ar@/_/[ddr]_{\alpha _Yr} \ar@/^/[drr]^g
                       \\
   & {E'}  \ar@{>>}[d]^{p'} \ar[r]^{f'}
                      & E \ar@{>>}[d]^p    \\
   & {Y} \ar[r]_{f}     & X               }
$$

We prove:

\begin{theorem}\label{main} The restriction,
$$
\fib\colon\mathbf{Ho CW}^{\mathbb{R}_+}\longrightarrow \mathbf{Sets}
$$
is a representable functor: there is a  based exterior path connected CW-complex $Y_F$, unique up to based exterior homotopy, and a {universal} based exterior fiber sequence
$$
F\rightarrow U\stackrel{q}{\twoheadrightarrow
}Y_F
$$
such that
$$
[-,Y_F]^{\mathbb{R}_+}\stackrel{\simeq}{\longrightarrow} \fib\qquad f\mapsto \fib(f)(q)$$  is
a natural equivalence.
\end{theorem}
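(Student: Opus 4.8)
The plan is to adapt Brown's representability argument \cite{Br} to the exterior category, verifying that $\fib$ is half-exact on $\mathbf{Ho CW}^{\mathbb{R}_+}$ --- that is, it satisfies the wedge (additivity) and Mayer--Vietoris (gluing) axioms --- and then running Brown's inductive cell-by-cell construction to produce $Y_F$ together with the universal element $q\in\fib(Y_F)$. Before this, I would record that $\fib$ is a well-defined contravariant functor: homotopic based exterior maps induce the same function because the based exterior pullback along a map depends, up to equivalence, only on its based exterior homotopy class, and the based exterior pullback of an exterior fiber sequence is again one, since pullbacks of fibrations exist in $\extbas$ and a composite of homotopy pullback squares is a homotopy pullback, so the fiber $F$ is preserved.

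For the wedge axiom I would show that restriction along the inclusions $X_\lambda\hookrightarrow\bigvee_\lambda X_\lambda$ gives a bijection $\fib(\bigvee_\lambda X_\lambda)\xrightarrow{\cong}\prod_\lambda\fib(X_\lambda)$. Each fiber sequence over $X_\lambda$ restricts over the common based ray to the fiber $F$; since $\bigvee_\lambda X_\lambda$ is the exterior pushout of the $X_\lambda$ along $\R_+$, one glues the total spaces $E_\lambda$ along this common fiber to obtain an exterior fibration over the wedge with fiber $F$. This furnishes an inverse to restriction, and injectivity follows from the uniqueness up to exterior homotopy of the gluing.

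The Mayer--Vietoris axiom is where I expect the real difficulty. Presenting $X$ as an exterior homotopy pushout $A\cup_C B$, given $u\in\fib(A)$ and $v\in\fib(B)$ whose restrictions to $C$ agree, I must glue an exterior fibration over $A$ and one over $B$ --- which become fiber exterior homotopy equivalent over $C$ --- into an exterior fibration over $X$ with the same fiber $F$. The obstruction is that $\extbas$ is not closed under arbitrary finite limits and colimits, so the usual model-categorical gluing lemmas are unavailable; instead one must work only with the pullbacks of fibrations and pushouts of cofibrations that do exist in $\extbas$. Concretely I would first rectify, up to based exterior homotopy, the equivalence over $C$ into a strict identification by a mapping-cylinder/fibrant-replacement argument, reduce to the case in which $C$ is an honest exterior subcomplex, and then assemble the total space as the exterior pushout of the cofibrantly replaced restrictions, checking that the resulting map is an exterior fibration and that the homotopy-pullback condition over the based ray --- hence the fiber $F$ --- is preserved.

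Granting both axioms, Brown's construction builds $Y_F$ as an exterior CW-complex by successively attaching cells $\DN^n$ along $\SN^{n-1}$ (both ordinary and $\N$-type) so that the natural transformation $[-,Y_F]^{\mathbb{R}_+}\to\fib$, $f\mapsto\fib(f)(q)$, with $q$ the class of $\id_{Y_F}$, becomes a bijection on every sphere $\SN^k$; these sequential attachments are exactly the exterior pushouts and colimits that do exist. The wedge and Mayer--Vietoris axioms then propagate the bijection along the cellular filtration to all of $\mathbf{Ho CW}^{\mathbb{R}_+}$, and since a map of exterior path connected based CW-complexes is an exterior homotopy equivalence precisely when it induces bijections on $[Z,-]^{\mathbb{R}_+}$ for finite $Z$ (recalled in \S1), one concludes that $f\mapsto\fib(f)(q)$ is a natural equivalence. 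Uniqueness of $Y_F$ up to based exterior homotopy is then the standard Yoneda argument for representing objects.
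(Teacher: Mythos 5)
Your overall strategy is the paper's: show that $\fib$ is a homotopy functor (wedge axiom plus Mayer--Vietoris) on $\mathbf{Ho CW}^{\mathbb{R}_+}$ and then invoke Brown representability. The essential difference is the technical device used to verify the axioms. The paper first transports two general statements into $\extbas$ --- Mather's First Cube Theorem (Lemma \ref{cube-theorem}) and the Gluing Lemma (Lemma \ref{gluing lemma}) --- and with these in hand both axioms become one-step applications: one forms the homotopy commutative cube whose bottom face is the given homotopy pushout of base spaces, whose top face is the homotopy pushout of the total spaces, and whose left and rear faces are the homotopy pullbacks recording that the restrictions agree; the cube theorem then asserts that the front and right faces are homotopy pullbacks, which is exactly the statement that the induced map over the pushout is a fiber sequence restricting to the given ones and having fiber $F$. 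This is the step your sketch underestimates. In your Mayer--Vietoris argument you propose to rectify the equivalence over $C$, form the exterior pushout of total spaces, and then ``check that the resulting map is an exterior fibration''; that check fails as stated (a pushout of fibrations along a cofibration is not in general a fibration), and even after a fibrant replacement the real content --- that the restrictions over $A$, $B$ and the based ray recover $u$, $v$ and $F$ up to fiber homotopy equivalence --- is precisely what the cube theorem delivers, so your rectification route would end up reproving it. The Gluing Lemma plays the analogous role for the injectivity half of the wedge axiom, which your sketch passes over with ``uniqueness up to exterior homotopy of the gluing.''

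On the final step, the paper does not rerun Brown's cell-by-cell construction; it checks that $(\mathbf{Ho CW}^{\mathbb{R}_+},\mathbf{Ho CW}^{\mathbb{R}_+}_f)$ is a compactly generated homotopy category in the sense of \cite{Br} --- existence of coproducts, homotopy pushouts, and homotopy colimits of towers with $[Y,Z]^{\R_+}\twoheadrightarrow\varprojlim[X_n,Z]^{\R_+}$ and $\varinjlim[Z,X_n]^{\R_+}\cong[Z,Y]^{\R_+}$ for finite $Z$, quoted from \cite{G-G-M3}, together with the observation that these constructions preserve exterior path connectivity --- and then cites \cite[Thm.~2.8]{Br} as a black box. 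Your inductive attachment of cells $\DN^n$ along $\SN^{n-1}$ is exactly Brown's proof of that theorem, so it is not a different argument, but to make it converge you would still need to supply those telescope/$\varprojlim$-surjectivity statements, which your sketch omits.
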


The rest of the section is devoted to the proof of this theorem. The requirements for a  set-valued  contravariant functors on the homotopy
category of a given closed model category to satisfy  Brown's representability  are now  well understood. However, the most explicit and precise result in this sense
\cite[Thm. 19]{jardine} cannot be applied in our case as  $\E^{\R_+}$ is not a model category. Hence, we use the original Brown approach \cite{Br}.

We shall need the following auxiliary results of general nature: the
\emph{First Cube Theorem} \cite[Thm. 18]{M} and the {\em Gluing Lemma} in $\extbas$. The proofs just mimics the ones on the classical setting  and therefore are omitted.  To
adjust them to the exterior homotopy setting, certain
modifications of a somehow straightforward nature are needed.

\begin{lemma}\label{cube-theorem}
Consider the following homotopy commutative cube in
$\mathbf{E}^{\mathbb{R}_+}$
$$\xymatrix@!0{
  & {\bullet } \ar[dl] \ar[rr] \ar'[d][dd]
      &  & {\bullet } \ar[dd] \ar[dl]       \\
  {\bullet } \ar[rr]\ar[dd]
      &  & {\bullet } \ar[dd] \\
  & {\bullet } \ar[dl] \ar'[r][rr]
      &  & {\bullet }   \ar[dl]             \\
  {\bullet } \ar[rr]
      &  & {\bullet }         }$$
\noindent where the top and bottom faces are exterior homotopy
pushouts and the left and rear faces are exterior homotopy
pullbacks. Then, the right and front faces are exterior homotopy
pullbacks.\hfill$\square$
\end{lemma}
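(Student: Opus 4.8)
The plan is to recast the statement as a comparison of exterior homotopy fibres and then feed that comparison into the gluing lemmas available in $\extbas$. Label the eight vertices so that the top face is the homotopy pushout of $C\leftarrow A\to B$ with apex $D$, the bottom face the homotopy pushout of $C'\leftarrow A'\to B'$ with apex $D'$, and the four vertical maps are $A\to A'$, $B\to B'$, $C\to C'$, $D\to D'$. With this bookkeeping the rear and left faces, which we assume to be homotopy pullbacks, are the squares on $A,B,A',B'$ and on $A,C,A',C'$, whereas the right and front faces to be analysed are the squares on $B,D,B',D'$ and on $C,D,C',D'$. First I would record the exterior fibre criterion for homotopy pullbacks: a commutative square in $\extbas$ is an exterior homotopy pullback exactly when, over each basepoint of its lower left corner, the induced exterior map between the homotopy fibres of the two vertical maps is an exterior homotopy equivalence. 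This is proved as in the classical case, replacing the topological mapping path space by the exterior cocylinder built from the based exterior cylinder $I^{\R_+}$; since the spaces of interest are exterior path connected, it suffices to test over the ray basepoint.

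Next I would rigidify the cube inside the limited (co)completeness that $\extbas$ does possess, namely the pullbacks of fibrations and the pushouts of cofibrations recalled in Section 1. Replacing the vertical maps $A\to A'$, $B\to B'$ and $C\to C'$ by exterior fibrations in a manner compatible with the rear and left squares turns those two faces into strict pullbacks. A direct inspection then shows that the fibres of $A\to A'$, $B\to B'$ and $C\to C'$ over corresponding basepoints may be taken to be one and the same exterior space $F$: indeed, since $A=B\times_{B'}A'$, the fibre of $B\to B'$ over a basepoint of $A'$ already lies in the image of $A$, so it coincides with the fibre of $A\to A'$, and symmetrically with that of $C\to C'$. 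Simultaneously I would model the two homotopy pushouts by strict pushouts along exterior closed cofibrations $A\to C$ and $A'\to C'$. The homotopy invariance of all these replacements is guaranteed by the (weak equivalence) Gluing Lemma in $\extbas$, which is exactly the left properness one has to transport from the classical setting.

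The core is then a single fibre computation. With $D=B\cup_A C$ and $D'=B'\cup_{A'}C'$ strict pushouts along the cofibrations above, and $A\to A'$, $B\to B'$, $C\to C'$ exterior fibrations, the exterior analogue of the classical fact that a pushout of fibrations along a cofibration is again a fibration shows that $D\to D'$ is an exterior fibration. Its fibre over the ray basepoint, read off from the strict pullback squares exactly as in the previous paragraph, is again $F$, and the inclusions $B\to D$ and $C\to D$ restrict on fibres to the identity of $F$. Hence the induced maps $F=\mathrm{hofib}(B\to B')\to\mathrm{hofib}(D\to D')$ and $F=\mathrm{hofib}(C\to C')\to\mathrm{hofib}(D\to D')$ are exterior homotopy equivalences, which by the fibre criterion of the first paragraph is precisely the assertion that the right and front faces are exterior homotopy pullbacks. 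Note that the pullback hypotheses on the rear and left faces enter decisively, and only, through the identification of all three fibres with a single $F$; this is what breaks the symmetry and prevents the argument from ``proving'' that the rear and left faces are pullbacks as well.

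The main obstacle is the two rigidification steps performed inside $\extbas$, which is not a genuine model category. One must check that the compatible fibration and cofibration replacements can be carried out using only the pullbacks-of-fibrations and pushouts-of-cofibrations that $\extbas$ supports, and, above all, that both forms of the gluing lemma hold exteriorly: the weak equivalence version that legitimizes the replacements, and the fibration version that makes $D\to D'$ a fibration with fibre $F$. Each of these reduces to verifying that the classical mapping cocylinder, double mapping cylinder and gluing constructions respect externologies when built from the based exterior cylinder $I^{\R_+}$, after which the argument is formally identical to the classical one; these are the ``straightforward modifications'' alluded to before the statement.
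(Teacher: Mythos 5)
The paper does not actually prove this lemma: it defers to Mather's classical First Cube Theorem \cite{M} and asserts that the classical proof transfers with routine modifications, so the comparison must be with your strategy, and there the load-bearing step fails. Everything in your outline funnels through the ``fibre criterion'' of your first paragraph, and that criterion is doubly problematic. First, the reduction ``it suffices to test over the ray basepoint'' is inadmissible: the lemma is stated for arbitrary objects of $\extbas$, and in the paper's own application of it (Proposition \ref{equalizer-condition}) the cube is built from disjoint unions such as $\amalg_{i\in I}\,\R_+\times F$ and $\amalg_{i\in I}X_i$, which are not exterior path connected; you would therefore have to verify the comparison over all basepoints, whereas your fibre identifications are only justified over points coming from $A'$, while the right face requires control over all of $B'$ (over $b'\notin A'$ the fibre of $D\to D'$ is that of $B\to B'$ by a separate computation you never make). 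Second, and more seriously, the criterion is not ``proved as in the classical case''. In $\extbas$ the weak equivalences are genuine exterior homotopy equivalences (a Str{\o}m-type situation), so fibrewise detection of homotopy pullbacks is not the standard fact for the Quillen structure but an exterior analogue of Dold's fibrewise equivalence theorem, whose classical proof rests on numerable covers and partitions of unity --- machinery with no evident exterior counterpart, and certainly not among the ``straightforward modifications'' the paper alludes to. Worse, fibres over points are exteriorly degenerate: for instance, for a proper map the fibre over a point is compact, so its induced externology is its entire topology and exterior homotopy theory over it collapses to ordinary homotopy theory; equivalences on pointwise homotopy fibres therefore cannot detect behaviour at infinity, while an exterior homotopy pullback must (indeed the relevant fibre throughout this paper is the pullback over the based ray $\alpha_X$, not over a point). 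A criterion in terms of fibres over rays --- which is what your computation actually tests --- is established nowhere and would itself require the missing exterior Dold-type theorem.

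There are also two smaller defects in the rigidification step. You cannot make the rear and left faces simultaneously strict pullbacks with a single corner $A$: that would force $A=\tilde B\times_{B'}A'$ and $A=\tilde C\times_{C'}A'$, which are in general different spaces; one face must remain a pullback only up to equivalence, after which the fibre of $D\to D'$ over the ray is a pushout $B_r\cup_{A_r}C_r$ with $A_r\to C_r$ only an equivalence, and one needs $A\to C$ and $A'\to C'$ to be exterior closed cofibrations for this pushout to be homotopy invariant. Likewise, the statement that a pushout of fibrations along cofibrations (with varying base) is again a fibration is a genuine gluing theorem of tom Dieck--Kamps--Puppe type; you flag it honestly, but its exterior validity is of the same order of difficulty as the lemma itself. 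In short: even granting the gluing statements you acknowledge, the fatal gap is the unproved fibre criterion together with the illegitimate one-basepoint reduction; before the remaining steps can stand, your outline needs, at minimum, an exterior Dold-type theorem formulated over rays and a verification over all basepoints, neither of which follows from the constructions recalled in Section 1.
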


\begin{lemma} \label{gluing lemma} Consider the following homotopy commutative cube in
$\mathbf{E}^{\mathbb{R}_+}$
$$\xymatrix@!0{
  & {\bullet } \ar[dl] \ar[rr] \ar'[d]^{f_1}[dd]
      &  & {\bullet } \ar[dd]^{f_2} \ar[dl]       \\
  {\bullet } \ar[rr]\ar[dd]^{f_3}
      &  & {\bullet } \ar[dd]^(.3){f_4} \\
  & {\bullet } \ar[dl] \ar'[r][rr]
      &  & {\bullet }   \ar[dl]             \\
  {\bullet } \ar[rr]
      &  & {\bullet }         }$$
\noindent where the top and bottom faces are exterior homotopy pushouts and $f_1,f_2,f_3$ are exterior homotopy equivalences.
Then, $f_4$ is also an exterior homotopy equivalence.\hfill$\square$
\end{lemma}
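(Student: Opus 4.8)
\section*{Proof plan for Lemma \ref{gluing lemma}}

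\textbf{The plan} is to reduce the statement to the standard cofibrant replacement of pushout squares and then invoke the corresponding \emph{strict} gluing property of exterior homotopy pushouts. Since the two horizontal faces are exterior homotopy pushouts, I would first recall that in $\extbas$ a homotopy pushout of a span $B\leftarrow A\to C$ is computed, up to based exterior homotopy equivalence, by the double mapping cylinder $B\cup_A (A\bar\times I)\cup_A C$; this is legitimate because $\extbas$ carries the based exterior cylinder functor $I^{\R_+}(-)$ together with functorial based exterior cofibration--factorizations, so every span admits a factorization into a span of based exterior cofibrations whose ordinary pushout realizes the homotopy pushout. The cube therefore gives two such double mapping cylinders, call them $P$ (top) and $Q$ (bottom), and the three vertical arrows $f_1,f_2,f_3$ between the three corresponding corners of the two spans.

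\textbf{The key steps}, in order, are as follows. First I would replace both the top and the bottom spans by spans of based exterior cofibrations, using the factorization axioms available in $\extbas$, and check that the vertical maps can be replaced by strictly commuting maps between the two replaced spans that are still based exterior homotopy equivalences; this is a routine diagram-chase using the homotopy extension property for based exterior cofibrations. Second, with everything made strict, the two homotopy pushouts become ordinary pushouts of based exterior cofibrations, so the induced map on pushouts $P\to Q$ is precisely the map that, after these replacements, represents $f_4$ up to based exterior homotopy. Third, I would apply the elementary fact that the ordinary pushout of three weak equivalences along cofibrations is again a weak equivalence: concretely, I filter $Q$ by the images of the three pieces of the double mapping cylinder and show inductively, using the exterior pushout description of CW-type attachments and the homotopy extension property, that $f_4$ restricts to based exterior homotopy equivalences on each successive stage, hence is one on the colimit.

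\textbf{The main obstacle} I anticipate is the passage from the homotopy-commuting cube we are handed to the \emph{strictly} commuting one needed to form honest pushouts: exterior homotopies must be rectified along the based rays, and one must verify that the rectifying homotopies can themselves be chosen as based exterior homotopies compatible with the ray $\alpha$, so that the replaced vertical maps remain morphisms in $\extbas$ rather than merely exterior maps. This is exactly the point where the ``somehow straightforward'' modifications alluded to in the text are needed, and it hinges on the fact that the inclusion $\R_+\hookrightarrow X$ is a closed exterior cofibration for well based $X$, which guarantees that the homotopy extension property is available to perform the rectification without leaving $\extbas$. Once rectification is secured, the gluing conclusion is formal and follows the classical pattern, so no genuinely new exterior-theoretic input is required beyond the cofibration and cylinder structure recalled in \S1.
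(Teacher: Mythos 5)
Your first two steps --- realizing the two exterior homotopy pushouts as double mapping cylinders and rectifying the homotopy commutative cube to a strict map of spans via the homotopy extension property for based exterior cofibrations --- are sound, and they match what the paper intends when it declares that the proof ``just mimics the ones on the classical setting'' (the paper omits the argument entirely). Two small remarks there: in $\extbas$ the cylinder appearing in the double mapping cylinder must be the based exterior cylinder $I^{\R_+}A$ rather than $A\bar\times I$, since homotopies in $\extbas$ are defined through $I^{\R_+}$; and bare (not functorial) factorizations suffice, which is what the quasi-model structure of \cite[Thm. 2.12]{G-G-M} actually provides. The genuine gap is in your third step. The statement that the strict pushout of three based exterior homotopy equivalences along cofibrations is again a homotopy equivalence is not an ``elementary fact'' provable by filtering $Q$ by the three pieces of the double mapping cylinder: a map which restricts to homotopy equivalences on the pieces of a filtration by cofibrations need \emph{not} be a homotopy equivalence on the union, and the inference ``equivalence on each stage, hence on the colimit'' for a two-stage filtration \emph{is} the gluing lemma you are trying to prove --- your induction is circular. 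Moreover, the appeal to ``CW-type attachments'' is out of place: the lemma is stated for arbitrary objects of $\extbas$, not for exterior CW-complexes, so no cellular structure is available.

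What the classical proof actually uses at this point, and what your plan must therefore import into $\extbas$, is Dold's theorem on homotopy equivalences under a space: if $A\to B$ and $A\to B'$ are based exterior cofibrations and $f\colon B\to B'$ is a map under $A$ which is a based exterior homotopy equivalence, then $f$ has a homotopy inverse, and homotopies, \emph{under} $A$, i.e., it is a cofiber homotopy equivalence. Only with inverses and homotopies chosen compatibly under the common span vertex can a homotopy inverse of $f_4$ be assembled from inverses on the three pieces of the double mapping cylinder; without this, the piecewise inverses need not agree, even up to homotopy relative to the overlaps, on the intersections. Dold's theorem is itself a consequence of the homotopy extension property, so it does transfer to $\extbas$, where cofibrations are defined exactly by the HEP with respect to $I^{\R_+}$; this transfer --- not a filtration induction --- is the substance of the ``somehow straightforward'' modifications the paper alludes to when omitting the proof.
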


 With the vocabulary in \cite{Br}, we now see that on  $\mathbf{Ho CW}^{\mathbb{R}_+}$,   $\fib$ is a {\em homotopy functor}.

\begin{proposition}\label{equalizer-condition} The functor $\fib$ takes wedges into products:
Let $\{X_i\}_{i\in I}$ be a collection of objects in $\mathbf{CW}^{\mathbb{R}_+}$ and denote by $h_j\colon X_j\rightarrow \vee
_{i\in I}X_i$ the natural $j$th inclusion, $j\in I$. Then, the map
$$
\bigl(\fib(h_i)\bigr)_{i\in I}\colon \fib(\vee _{i\in I}X_i)\stackrel{\cong
}{\longrightarrow }\Pi_{i\in I}\fib(X_i)
$$
is a bijection.
\end{proposition}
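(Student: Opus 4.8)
The plan is to construct an explicit inverse to the map $\bigl(\fib(h_i)\bigr)_{i\in I}$ by gluing fiber sequences over the individual wedge summands into a single fiber sequence over the wedge. I would first recall that the wedge $\vee_{i\in I}X_i$ is the based exterior colimit of the $X_i$ along their common based ray $\mathbb{R}_+$, and that this wedge is again an object of $\mathbf{CW}^{\mathbb{R}_+}$. Given a family of classes $[\mathcal{F}_i]\in\fib(X_i)$, represented by exterior fiber sequences $F\stackrel{g_i}{\to}E_i\stackrel{p_i}{\twoheadrightarrow}X_i$, the idea is to form the exterior space $E=\bigvee_{i\in I}E_i$, the wedge being taken along the common fiber over the ray, and to assemble the $p_i$ into a single exterior map $p\colon E\twoheadrightarrow\vee_{i\in I}X_i$. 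One then shows that $p$ is an exterior fibration whose fiber over the based ray is (exterior homotopy equivalent to) $F$, thereby producing a class in $\fib(\vee_{i\in I}X_i)$ mapping to the given family under the restrictions $\fib(h_i)$.

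The key steps, in order, are as follows. First I would set up the wedge $E$ of the total spaces so that the maps $g_i\colon F\to E_i$, all of which restrict to the same inclusion of the fiber over the ray, are coherently glued; here the uniqueness up to exterior homotopy of the retraction $r\colon F\to\mathbb{R}_+$ noted after the definition of $\fib$ guarantees that the fibers match up along the based ray. Second, I would verify that the induced map $p\colon E\twoheadrightarrow\vee_i X_i$ is an exterior fibration: since fibrations in $\E^{\mathbb{R}_+}$ are the based exterior maps satisfying the homotopy lifting property, and a lifting problem against $p$ decomposes over the wedge summands, the individual lifts of the $p_i$ assemble into a lift against $p$. Third, I would check the homotopy pullback condition defining a fiber sequence, namely that pulling $p$ back along $\alpha_{\vee X_i}$ recovers $F$ up to exterior homotopy equivalence; this is immediate because the based ray of the wedge factors through each summand's ray, where the pullback is $F$ by hypothesis. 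Fourth, I would confirm that $\fib(h_j)[\mathcal{F}]=[\mathcal{F}_j]$, i.e.\ that the based exterior pullback of $p$ along the inclusion $h_j$ recovers $p_j$; this follows because $h_j$ identifies $X_j$ with a summand and the pullback of a wedge of fibrations along a summand inclusion is that summand's fibration. Finally, I would prove injectivity: if two families of fiber sequences glue to equivalent sequences over the wedge, restricting the equivalence $\gamma$ along each $h_j$ produces equivalences $\gamma_j$ over each $X_j$, so the original classes agree summand by summand.

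The main obstacle I expect is ensuring that the gluing is homotopy-invariant and well defined at the level of equivalence classes, and in particular that the assembled $p$ genuinely remains an exterior fibration in $\E^{\mathbb{R}_+}$ rather than merely a quasifibration. The subtlety is that wedges and pullbacks interact delicately in $\E^{\mathbb{R}_+}$, which is not closed under all finite colimits, so I must argue within the constraints that pushouts of cofibrations and pullbacks of fibrations do exist there. To control the homotopy type of the glued total space and to pass between the strict pullback and the homotopy pullback in the diagram defining a fiber sequence, I would lean on the Gluing Lemma (Lemma~\ref{gluing lemma}): the total space $E$ over the wedge is built as an iterated exterior homotopy pushout of the $E_i$ along the common fiber, and the Gluing Lemma ensures that replacing each summand by an exterior homotopy equivalent model does not change the exterior homotopy type of $E$, so the construction descends to equivalence classes. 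Checking that $p$ satisfies the homotopy lifting property uniformly, possibly after replacing $E$ by a homotopy equivalent fibrant model and invoking the First Cube Theorem (Lemma~\ref{cube-theorem}) to keep the relevant squares homotopy pullbacks, is where the bulk of the care will be needed; the set-theoretic bookkeeping over an arbitrary index set $I$ is routine once the two-summand case is settled.
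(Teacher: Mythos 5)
Your surjectivity construction is essentially the paper's: glue the total spaces $E_i$ along copies of the fibre $F$ sitting over the based ray, so that $E$ is the exterior homotopy pushout of $\amalg_{i}E_i \leftarrow \amalg_{i}\,\mathbb{R}_+\times F \rightarrow F$ over the homotopy pushout presenting $\vee_{i}X_i$, and apply the First Cube Theorem to conclude that the front and right faces of the resulting cube are exterior homotopy pullbacks; this simultaneously identifies the fibre of the induced map $p$ with $F$ and its restriction along each $h_j$ with $p_j$. Your step 2, however, does not work as stated: a homotopy lifting problem against $p$ has source $Z\bar\times I$ mapping into the whole wedge, and such a homotopy can cross between summands through the ray, so it does not decompose over the summands and the individual lifts cannot simply be assembled. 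The correct remedy, which you only half-suggest, is to replace the induced map by an exterior fibration up to based exterior homotopy equivalence; since equivalence of fiber sequences and the homotopy pullback condition are homotopy invariant, this costs nothing, and no direct verification of the lifting property is needed.

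The genuine gap is in injectivity. What you prove is that if two families glue to equivalent sequences over the wedge, then the families agree summand by summand; that is injectivity of your gluing map $\Phi$ (equivalently, well-definedness of the restrictions of glued sequences), and together with your step 4 it only re-establishes surjectivity of $\bigl(\fib(h_i)\bigr)_{i}$. Injectivity of $\bigl(\fib(h_i)\bigr)_{i}$ is the opposite implication: two fiber sequences $p^1,p^2$ over $\vee_{i}X_i$ whose restrictions $p^1_i,p^2_i$ are equivalent for every $i$ must themselves be equivalent. For this one needs, first, that each $E^k$ is recovered up to equivalence over the wedge as the homotopy pushout of the $E^k_i$ along $\amalg_{i}\,\mathbb{R}_+\times F\to F$ (using that $p^k$ is a fibration), and second, the Gluing Lemma applied to the cube whose vertical maps are the identity on $\amalg_{i}\,\mathbb{R}_+\times F$, the coproduct $\amalg_{i}\alpha_i$ of the given summand-wise equivalences, and the identity on $F$; this produces a based exterior homotopy equivalence $\alpha\colon E^1\to E^2$ with $p^2\alpha\simeq p^1$. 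This is exactly how the paper concludes, and it is the step your outline omits.
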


\begin{proof}
Let $\{F\rightarrow E_i\stackrel{p_i}{\rightarrow }X_i\}_{i\in I}$
be a collection of exterior fibrations and consider the
following commutative cube of based exterior CW-complexes,
$$\xymatrix@R=0.90cm@C=1.50cm@!0{
  & {\amalg_{i\in I}\,\R_+\times F} \ar[rrr]
  \ar'[dd][dddd] \ar[ddl]
      &  &  &  {\amalg_{i\in I}E_i} \ar[dddd]^{\amalg_{i\in I}p_i} \ar[ddl] \\
                &  &  & \\
  F \ar[rrr] \ar@{>>}[dddd]_r
      &  &  &  {E} \ar@{.>}[dddd]^(.3){p} \\
                &  &  &   \\
  & {\amalg_{i\in I}\,\R_+} \ar'[rr][rrr]  \ar[ddl] &  &  & {\amalg_{i\in I}X_i}
  \ar[ddl]^{} \\
                &  &  & \\
  {\mathbb{R}_+} \ar[rrr]  &  &  &  {\vee_{i\in I}X_i}   }$$
\noindent where the top and bottom faces are exterior homotopy
pushouts and the rear and left faces are exterior homotopy
pullbacks. Then, taking $p\colon E\rightarrow \vee_{i\in I}X_i$ the
induced based exterior map and applying Lemma \ref{cube-theorem} we conclude that the
right and front faces are exterior homotopy pullbacks. This proves
that $\bigl(\fib(h_i)\bigr)_{i\in I}$ is onto.

Now let
$F\stackrel{g^k}\rightarrow E^k\stackrel{p^k}\twoheadrightarrow
\vee_{i\in I}X_i$ ($k=1,2$) be two based exterior sequences such that, for
every $i\in I$, there exist a  commutative diagram
$$\xymatrix{
 & {E^1_i} \ar@{>>}[dr]^{p^1_i} \ar[dd]_{\simeq }^{\alpha _i} & \\
 {F} \ar[ur]^{g^1_i} \ar[dr]_{g^2_i} & & {X_i} \\
 & {E^2_i} \ar@{>>}[ur]_{p^2_i} &
}$$ \noindent where $F\stackrel{g^k_i}\rightarrow
E^k_i\stackrel{p^k_i}\twoheadrightarrow X_i$ is $\fib(h_i)(p^k)$, i.e,  the
pullback of $p^k$ along $h_i$. Consider the cube
$$\xymatrix@R=0.90cm@C=1.50cm@!0{
  & {\amalg_{i\in I}\,\R_+\times F} \ar[rrr]
  \ar@{=}'[dd][dddd] \ar[ddl]
      &  &  &  {\coprod _{i\in I}E^1_i} \ar[dddd]^{\amalg_{i\in I}\alpha _i}_{\simeq} \ar[ddl] \\
                &  &  & \\
  F \ar[rrr]^{g^1} \ar@{=}[dddd]
      &  &  &  {E^1} \ar@{.>}[dddd]^(.3){\alpha } \\
                &  &  &   \\
  & {\amalg_{i\in I}\, \R_+\times F} \ar'[rr][rrr]  \ar[ddl] &  &  & {\amalg_{i\in I}E^2_i}
  \ar[ddl]^{} \\
                &  &  & \\
  {F} \ar[rrr]^{g^2}  &  &  &  {E^2}   }$$
  where the top and bottom faces are based exterior homotopy pushouts and apply Lemma \ref{gluing lemma} to conclude that the induced map $\alpha$ is a based exterior homotopy equivalence. Moreover, $p^2\alpha \simeq p^1$ so that $\bigl(\fib(h_i)\bigr)_{i\in I}$ is injective.
\end{proof}

\begin{proposition}\label{Mayer-Vietoris}
Let
$$\xymatrix{
{A} \ar[d]_{f_1} \ar[r]^{f_2} & {X_2} \ar[d]^{g_1}  \\
{X_1} \ar[r]_{g_2} & {Y} }$$
be a homotopy pushout in $\mathbf{Ho CW}^{\mathbb{R}_+}$. Then, the induced
map
$$ \fib(Y)\longrightarrow \fib(X_1)\times_{\fib(A)}\fib(X_2)$$
\noindent is surjective: if $\fib(f_1)(p_1)=\fib(f_2)(p_2)$, then there
exists $p\in \fib(Y)$ such that $\fib(g_1)(p)=p_1$ and $\fib(g_2)(p)=p_2$.
\end{proposition}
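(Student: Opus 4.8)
The plan is to glue the two total spaces $E_1$ and $E_2$ along their common restriction over $A$, and then use the First Cube Theorem (Lemma~\ref{cube-theorem}) to certify that the glued object restricts correctly over $X_1$ and over $X_2$. First I would fix exterior fibrations $p_1\colon E_1\twoheadrightarrow X_1$ and $p_2\colon E_2\twoheadrightarrow X_2$ representing the two given classes, and form the based exterior pullbacks $E_A:=f_1^*E_1\twoheadrightarrow A$ and $f_2^*E_2\twoheadrightarrow A$. The hypothesis $\fib(f_1)(p_1)=\fib(f_2)(p_2)$ supplies an exterior based homotopy equivalence $\phi\colon E_A\stackrel{\simeq}{\longrightarrow}f_2^*E_2$ over $A$, compatible with the fibre inclusions. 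Let $\tilde f_1\colon E_A\to E_1$ be the canonical map covering $f_1$, and let $\tilde f_2\colon E_A\to E_2$ be the composite of $\phi$ with the canonical map $f_2^*E_2\to E_2$ covering $f_2$. I would then define $E$ to be the based exterior homotopy pushout of $E_1\stackrel{\tilde f_1}{\longleftarrow}E_A\stackrel{\tilde f_2}{\longrightarrow}E_2$, which, together with $p_1$, $p_2$ and the projection $E_A\to A$, induces a based exterior map $p\colon E\to Y$ (replaced by a fibration if necessary).

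The key step is to organise these data into a homotopy commutative cube in $\mathbf{E}^{\mathbb{R}_+}$ whose bottom face is the given homotopy pushout $A,X_1,X_2,Y$, whose top face is the homotopy pushout $E_A,E_1,E_2,E$, and whose four vertical maps are $E_A\to A$, $p_1$, $p_2$ and $p$. The left face is the pullback square of $p_1$ along $f_1$, hence an exterior homotopy pullback because $p_1$ is a fibration; the rear face is the pullback square of $p_2$ along $f_2$ precomposed along the equivalence $\phi$, and is therefore also an exterior homotopy pullback. Since the top and bottom faces are exterior homotopy pushouts, Lemma~\ref{cube-theorem} yields that the front and right faces are exterior homotopy pullbacks. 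The front face asserts exactly that the based exterior pullback of $p$ along $g_2$ is $p_1$, that is $\fib(g_2)(p)=p_1$, and the right face gives $\fib(g_1)(p)=p_2$.

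It remains to check that $p$ genuinely represents an element of $\fib(Y)$, i.e. that it is a fibre sequence with fibre $F$. Since $Y$ is a based homotopy pushout, its ray satisfies $\alpha_Y\simeq g_2\alpha_{X_1}$, so pasting the front-face homotopy pullback with the defining homotopy pullback of $p_1$ over $\alpha_{X_1}$ exhibits the exterior homotopy pullback of $p$ along $\alpha_Y$ as $F$, the fibre inclusion being the composite of that of $p_1$ with $\tilde g_2$ and the retraction being that of $p_1$. The main obstacle I anticipate is precisely this based bookkeeping: one must verify that the equivalence $\phi$ furnished by the hypothesis is compatible with the fibre inclusions and the rays, so that the rear face is an honest homotopy pullback of \emph{based} exterior spaces and the identifications $\fib(g_i)(p)=p_i$ hold as equivalences of fibre sequences (not merely of the underlying maps), and that the homotopy pushout defining $E$ respects the based rays so that $p$ is a based exterior map whose fibre over the ray of $Y$ is $F$. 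Once the based structure is tracked through the pushout, the surjectivity of $\fib(Y)\to\fib(X_1)\times_{\fib(A)}\fib(X_2)$ follows.
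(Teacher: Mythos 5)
Your proof is correct and is exactly the argument the paper intends: the paper's own proof consists of the single sentence that the proposition ``is just a direct application of Lemma \ref{cube-theorem}'', and your cube --- top face the homotopy pushout of total spaces glued along the equivalence over $A$, bottom face the given pushout, left and rear faces the pullback squares of $p_1$ and $p_2$ --- is precisely that application. You have simply written out the details (including the based bookkeeping and the identification of the fibre over $\alpha_Y$) that the authors leave implicit.
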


\begin{proof}
This is just a direct application of Lemma
\ref{cube-theorem}.
\end{proof}

\begin{proof}[Proof of Theorem \ref{main}] If we drop the exterior path connectivity assumption on the category of exterior CW-complexes, the following is proved in  Lemma 4.1 of  \cite{G-G-M3}:

\begin{itemize}
\item[(i)] The category $\mathbf{Ho CW}^{\mathbb{R}_+}$ (respec. $\mathbf{Ho CW}^{\mathbb{R}_+}_f$) has arbitrary (respec. finite) coproducts and homotopy pushouts.

\item[(ii)] There exists the homotopy colimit $Y$  of any direct system
$$\xymatrix{{X_1} \ar[r] & {X_2} \ar[r] & {X_3} \ar[r] \ar[r]
& ... \ar[r] & {X_n} \ar[r] & {X_{n+1}} \ar[r] & ... }$$
in $\mathbf{Ho CW}^{\mathbb{R}_+}$ and the natural maps
$$
[Y,Z]^{\mathbb{R}_+}\twoheadrightarrow
\varprojlim\hspace{3pt}[X_n,Z]^{\mathbb{R}_+}\qquad\text{and}\qquad \varinjlim\hspace{3pt}[Z,X_n]^{\mathbb{R}_+}\stackrel{\cong}{\rightarrow}
[Z,Y]^{\mathbb{R}_+}
$$
are, respectively, a surjection  for every $Z\in \mathbf{Ho CW}^{\mathbb{R}_+}$, and a bijection for every $Z\in \mathbf{Ho CW}^{\mathbb{R}_+}_f$.

\end{itemize}

A careful check shows that the restriction to exterior path connected CW-complexes does not change the assertions above. That is, the arbitrary wedge, the homotopy pushout and the homotopy colimit of any direct system of exterior path connected CW-complexes is also exterior path connected. For it, one needs general results on connectivity and cellular approximation of exterior CW-complexes which, for instance, are condensed in  \cite[\S2]{G-G-M2}.

 For all of the above, the pair  $(\mathbf{Ho CW}^{\mathbb{R}_+},\mathbf{Ho CW}^{\mathbb{R}_+}_f)$ is a {\em homotopy category} in the sense of \cite[\S2]{Br}. Furthermore, tha last paragraph of \S1 amounts to say that $\mathbf{Ho CW}^{\mathbb{R}_+}$ is compactly generated by  $\mathbf{Ho CW}^{\mathbb{R}_+}_f$.

On the other hand, and also with the  vocabulary of op.cit., Propositions \ref{equalizer-condition} and \ref{Mayer-Vietoris} show that $\fib$ is a {\em homotopy functor}.

Hence, applying \cite[Thm. 2.8]{Br} finishes the proof.
\end{proof}

As an application we give a classification of fibrations in the proper setting.

\begin{definition} A {\em proper fibration} is a proper map  $p\colon E\to X$ such that $p_{cc}\colon E_{cc}\to X_{cc}$ is an exterior fibration.
\end{definition}

In other words, a proper fibration is a map in the proper category which is an exterior fibration when considered in the exterior category through the full embedding in (\ref{embebe}).
 This is a slightly different object from that on   \cite[Def. 1]{adoquin}, cf. \cite{chap}, in which the authors consider proper  maps that are  Hurewicz fibrations .

 Let $p\colon (E,\br_+)\to (X,\br_+)$ be a based proper fibration   between countable, locally finite relative CW-complexes, and let $F$ be its fiber regarded as a exterior based space  via the exterior pullback of $\br_+\to X\stackrel{p}{\leftarrow} E$. Then, we have:

 \begin{corollary}\label{proper} The proper fibration $p$ is equivalent to the pullback of the universal fibration $ U\to Y_F$ along a based exterior map $ X\to Y_F$.
\end{corollary}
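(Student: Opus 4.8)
The plan is to deduce the statement from Theorem \ref{main} by transporting the whole situation into the exterior category along the full embedding $(-)_{cc}\colon\PP\hookrightarrow\E$ of (\ref{embebe}). By the very definition of a proper fibration, $p_{cc}\colon E_{cc}\to X_{cc}$ is an exterior fibration in $\extbas$; and since $(-)_{cc}$ is full and compatible with cylinders (so that it descends to the homotopy categories, as recalled in \S1), it suffices to establish the asserted equivalence at the exterior level. I would therefore spend the first part of the argument checking that the hypotheses of Theorem \ref{main} are genuinely met after passing through $(-)_{cc}$: namely that $X_{cc}$ and $E_{cc}$ lie in $\mathbf{Ho CW}^{\mathbb{R}_+}$, and that the fiber $F$, obtained as the exterior pullback of $\alpha_X\colon\br_+\to X_{cc}$ along $p_{cc}$, is a well based, exterior path connected exterior CW-complex, so that the functor $\fib$ and its classifying object $Y_F$ are indeed available for this particular $F$.

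Once the set-up is in place, the conclusion is a formal consequence of representability. The fibration $p_{cc}$, together with the retraction $r\colon F\to\br_+$ and the homotopy pullback square defining $F$, is by construction an exterior fiber sequence $F\to E_{cc}\xrightarrow{p_{cc}} X_{cc}$ over $X_{cc}$ with fiber $F$ (that the strict pullback computing $F$ is a homotopy pullback is automatic because $p_{cc}$ is an exterior fibration). Hence it represents a class $[p_{cc}]\in\fib(X_{cc})$. Theorem \ref{main} provides the natural bijection $[-,Y_F]^{\R_+}\stackrel{\simeq}{\to}\fib$, $f\mapsto\fib(f)(q)$, so there is a class $f\in[X_{cc},Y_F]^{\R_+}$, unique up to based exterior homotopy, with $\fib(f)(q)=[p_{cc}]$. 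Unwinding the definition of $\fib(f)$ recorded before Theorem \ref{main}, this equality says precisely that $p_{cc}$ is equivalent, as an exterior fiber sequence over $X_{cc}$, to the based exterior pullback of the universal fibration $U\to Y_F$ along $f$; transported back through the embedding this is the assertion of the corollary.

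The main obstacle, and the only step that is not purely formal, is the verification that the locally finite, countable relative CW-structures on $X$ and $E$ really do translate, under the cocompact externology, into honest exterior CW-structures built from the $\N$-spheres and $\N$-disks (and ordinary spheres and disks) of \S1, so that $X_{cc},E_{cc}\in\mathbf{Ho CW}^{\mathbb{R}_+}$; here one invokes the identification of proper, locally finite cellular structures with exterior cell structures as in \cite{B-Q} and \cite[\S2]{G-G-M2}. Alongside this I would check the exterior path connectivity of the fiber $F$ and its well basedness, the latter following since the pullback of the closed exterior cofibration $\alpha_X$ along the fibration $p_{cc}$ remains a closed cofibration. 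With these verifications discharged, everything else reduces to applying the representability already proved.
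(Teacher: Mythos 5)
Your proposal is correct and follows essentially the same route as the paper: pass through the embedding $(-)_{cc}$, invoke the identification of countable, locally finite relative CW-complexes with based exterior CW-complexes under the cocompact externology (the paper cites \cite[\S2.1]{G-G-M2} and \cite[\S5.B]{Ext_1} for exactly this), recognize $F\to E_{cc}\to X_{cc}$ as an exterior fiber sequence, and apply the representability of Theorem \ref{main}. The paper's proof is just a terser version of your argument, leaving the routine verifications you spell out implicit.
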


\begin{proof} By \cite[\S2.1]{G-G-M2} or \cite[\S5.B]{Ext_1} any countable, locally finite
relative CW-complex of the form $(X,\R_+)$ is a based exterior CW-complex
endowed with the cocompact externology. Hence, $F\to E\stackrel{p}{\to} X$  is an exterior fiber sequence  to which we may apply Theorem \ref{main}.
\end{proof}

\bigskip
\bigskip\bigskip
\noindent{\sc J. M. Garc\'{\i}a-Calcines, P. R. Garc\'{\i}a-D\'{\i}az\hfill\break Departamento de Matem\'aticas, Estad{\'\i}stica e I.O.,  Universidad de La Laguna, Ap. 456, 38200 La Laguna, Spain.}\hfill\break
\texttt{jmgarcal@ull.es, prgdiaz@ull.es}
\hfill\break

\medskip
\noindent {\sc A. Murillo \hfill\break Departamento de \'Algebra, Geometr\'{\i}a y Topolog\'{\i}a, Universidad de M\'alaga, Ap.\ 59, 29080 M\'alaga, Spain}.\hfill\break
\texttt{aniceto@uma.es}

\end{document}